\documentclass{article}
\usepackage[english]{babel}
\usepackage{amsrefs,amssymb,amsmath,amsthm,amsfonts,epsfig,graphicx,psfrag}
\usepackage{xcolor}
\usepackage[all,cmtip]{xy}
\usepackage[utf8]{inputenc}

\theoremstyle{plain}
\newtheorem{thm}{Theorem}[section]

\newtheorem{lem}[thm]{Lemma}

\theoremstyle{definition}

\theoremstyle{remark}

\DeclareMathOperator{\diam}{diam}

\DeclareMathOperator{\interior}{int}
\DeclareMathOperator{\dist}{dist}

\newcommand{\Z}{\mathbb Z}

\renewcommand{\epsilon}{\varepsilon}

\begin{document}

\author{Alfonso Artigue\footnote{Departamento de Matem\'atica y Estad\'{\i}stica del
Litoral, Universidad de la Rep\'ublica, Gral. Rivera 1350, Salto,
Uruguay. The first author is partially supported  by PEDECIBA,
Uruguay.  Email: aartigue@fing.edu.uy} and Dante
Carrasco-Olivera\footnote{Departmento of Matem\'atica, Universidad
de B\'{\i}o-B\'{\i}o, Casilla 5--C, Concepci\'on, Chile. The second
author is partially supported by project FONDECYT 11121598,
CONICYT-Chile. Email: dcarrasc@ubiobio.cl}}
%\address{vvv}
%\address{Imerl, Facultad de Ingenier\'\i a, Julio Herrera y Reissig 565, Montevideo, Uruguay}
%\email{aartigue@fing.edu.uy}
\title{\textcolor{black}{A Note on Measure-Expansive Diffeomorphisms}}
\date{\today}
\maketitle

\begin{abstract}
\textcolor{black}{In this note we prove that a homeomorphism is
countably-expansive if and only if it is measure-expansive. 
This result is applied for showing that
the \mbox{$C^1$-interior} of the sets
of expansive, measure-expansive and continuum-wise expansive
\mbox{$C^1$-diffeomorphisms} coincide}.
\end{abstract}

\section{Introduction}
\textcolor{black}{The phenomenon of expansiveness occurs when the
trajectories of nearby points are separated by the dynamical
system.}
% from the initial one, this is known
% as a property of expansiveness.
The first research who considered expansivity in dynamical systems
was by Utz \cite{U50}. There, he defined the notion of unstable
\textcolor{black}{homeomorphism}. An extensive literature related to
properties of expansiveness can be found in
\textcolor{black}{\cites{BW,BW69,CZ09,CS99,F89,G72,GH,H87,JU60,K98,L89,M79,O70,R65,RR87,Sa95,S72,TV99,W66,W69}.}

If $f\colon M\to M$ is a homeomorphism of a compact metric space
$(M,\dist)$ and  if $\delta>0$ we define
\[
 \Gamma_\delta(x)=\{y\in M:\dist(f^n(x),f^n(y))\leq\delta\hbox{ for all }n\in\Z\}.
\]
Let us recall some definitions \textcolor{black}{that can be found
for example in} \cite{MoSi}. We say that $f$ is \emph{expansive} if
there is $\delta>0$ such that $\Gamma_\delta(x)=\{x\}$ for all $x\in
M$. Given \textcolor{black}{a Borel} probability measure $\mu$ on
$M$ we say that $f$ is $\mu$-\emph{expansive} if there is $\delta>0$
such that for all $x\in M$ it holds that $\mu(\Gamma_\delta(x))=0$.
\textcolor{black}{In this case we also say that $\mu$ is an
\emph{expansive measure} for $f$.} We say that $f$ is
\emph{measure-expansive} if it is $\mu$-expansive for every
non-atomic Borel probability measure $\mu$. \textcolor{black}{Recall
that $\mu$ is non-atomic if $\mu(\{x\})=0$ for all $x\in M$.} The
corresponding concepts for flows have been considered
\textcolor{black}{in} \cite{CaMo}. Moreover, we say that $f$ is
\emph{countably-expansive} if there is $\delta>0$ such that for all
$x\in \textcolor{black}{M}$ the set $\Gamma_\delta(x)$ is countable.

In \cite{MoSi} it is proved that the following statements are equivalent:
% if following result was proved:
% \begin{prop}
% \label{countVi-Mo}
%  Let $f\colon M\to M$ be a homeomorphism of a compact metric space $M$.
%  The following statements are equivalent:
 \begin{enumerate}
 \item $f$ is countably-expansive,
 \item every non-atomic Borel probability measure of $M$ is expansive with a common expansive
 constant.
  \end{enumerate}
% \end{prop}
Moreover, \textcolor{black}{they make the following question}: are
there measure-expansive homeomorphisms of compact metric space which
are not countably-expansive? \textcolor{black}{We give a negative
answer in Theorem \ref{count}.}

\textcolor{black}{We next study robust expansiveness of
\mbox{$C^1$-diffeomorphisms} of a smooth manifold.} For a fixed
manifold $M$, we denote by $\mathcal{E}$ the set of all expansive
diffeomorphisms of $M$. \textcolor{black}{In order to state our next
result let us recall more definitions}. We say that $C\subset M$ is
a \emph{continuum} if it is compact and connected. A \emph{trivial
continuum} (or \emph{singleton}) is a continuum with only one point.
Recall from \textcolor{black}{\cites{Ka,K98}} that $f$ is
\emph{continuum-wise expansive} (or \emph{cw-expansive}) if there is
$\delta>0$ such that if $C\subset M$ is a non-trivial continuum then
there is $n\in\Z$ such that $\diam(f^n(C))>\delta$. Denote by
$\mathcal{CE}$ the \textcolor{black}{set} of all cw-expansive
diffeomorphisms and by $\mathcal{PE}$ the set of all
measure-expansive diffeomorphisms of $M$. We denote by $\interior A$
the \mbox{$C^1$-interior} of a set $A$ of
\mbox{$C^1$-diffeomorphisms} of $M$. In \cite{Ma}
R. Mañé proved that the \mbox{$C^1$-interior} of the set of
expansive diffeomorphisms coincides with the set of quasi-Anosov
diffeomorphisms. See \cite{Ma} for the definitions and the proof.
\textcolor{black}{This result was later extended for cw-expansive homeomorphisms in
\cite{Sa97} proving that $\interior{\mathcal{E}}=\interior{\mathcal{CE}}.$
Recently, it was proved in \cite{SaSuYa} that $\interior{\mathcal{E}}=\interior{\mathcal{PE}}.$
% that 
% $\interior{\mathcal{E}}=\interior{\mathcal{PE}}.$
% In general only is holds: $ \interior{\mathcal{PE}}\subset
% \interior{\mathcal{IE}}$. Moreover, there exists $f\in
% \interior{\mathcal{PE}}$ but $f\notin \interior{\mathcal{IE}}$.
% Indeed, take the diffeomorphims time-one of the vector field
% south-north.
In Theorem \ref{maintheorem} we 
% \textcolor{black}{
give a new proof of the cited result from \cite{SaSuYa} based on Theorem \ref{count} and \cite{Sa97}.}

{\it Acknowledgement.} The authors would like to acknowledge the
many valuable suggestions made by the Professors  H. Miranda and A. Rambaud. \textcolor{black}{The first author thanks to 
Universidad de Bío-Bío for the kind hospitality during the preparation of this work.}

\section{Proofs of the results}
Our first result holds for a homeomorphism $f\colon M\to M$ of a compact metric space $(M,\dist)$.
\begin{thm}
\label{count}
 The following statements are equivalent:
 \begin{enumerate}
 \item $f$ is countably-expansive,
  \item $f$ is measure-expansive.
 \end{enumerate}
\end{thm}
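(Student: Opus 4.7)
The easy direction is (1) $\Rightarrow$ (2): if there is a uniform $\delta>0$ with $\Gamma_\delta(x)$ countable for every $x$, and $\mu$ is any non-atomic Borel probability measure, then $\mu(\Gamma_\delta(x))=0$ since countable sets are $\mu$-null. So the same $\delta$ works for every non-atomic measure, and in particular $f$ is measure-expansive. I would dispose of this direction in one line.

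The content is in (2) $\Rightarrow$ (1), which I would prove by contrapositive: assuming $f$ is not countably-expansive, I will manufacture a single non-atomic Borel probability measure $\mu$ on $M$ that is not expansive. The hypothesis gives, for every $n\in\N$, a point $x_n\in M$ such that $\Gamma_{1/n}(x_n)$ is uncountable. The set $\Gamma_{1/n}(x_n)=\bigcap_{k\in\Z}\{y: \dist(f^k(x_n),f^k(y))\leq 1/n\}$ is closed (an intersection of closed sets) in the compact metric space $M$, hence an uncountable Polish space. By the Cantor--Bendixson theorem it contains a non-empty perfect compact subset $K_n$, and any such set supports a non-atomic Borel probability measure $\nu_n$ (for instance by pushing forward a Cantor measure along a homeomorphic copy of $\{0,1\}^\N$ inside $K_n$).

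I would then extend each $\nu_n$ to a Borel probability on $M$ by setting $\mu_n(A)=\nu_n(A\cap K_n)$, and define
\[
\mu \;=\; \sum_{n\ge 1} 2^{-n}\,\mu_n.
\]
This $\mu$ is a Borel probability on $M$ and is non-atomic because each $\mu_n$ is. Now for any candidate expansive constant $\delta>0$, choose $n$ with $1/n<\delta$; then $K_n\subset \Gamma_{1/n}(x_n)\subset \Gamma_\delta(x_n)$ and therefore
\[
\mu(\Gamma_\delta(x_n)) \;\ge\; 2^{-n}\,\mu_n(K_n) \;=\; 2^{-n} \;>\; 0.
\]
Since this holds for every $\delta>0$, $\mu$ is not an expansive measure for $f$, so $f$ is not measure-expansive. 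This contradicts (2) and completes the implication.

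The only non-routine ingredient is the step where an uncountable closed subset of a compact metric space is shown to carry a non-atomic probability measure; this is a standard descriptive set-theoretic consequence of Cantor--Bendixson, so I would cite it rather than reprove it. The rest is bookkeeping: verifying that $\Gamma_\delta(x)$ is Borel (it is even closed), checking that the weighted sum $\mu$ is non-atomic, and choosing $n$ large enough to defeat any proposed $\delta$. I do not anticipate a serious obstacle beyond making sure the construction produces a single measure $\mu$ that simultaneously witnesses failure of expansivity for every $\delta$, which is exactly what the weighted sum over $n$ achieves.
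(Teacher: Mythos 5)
Your proof is correct and follows essentially the same route as the paper: the easy direction via countability of $\Gamma_\delta(x)$, and for the converse the same weighted sum $\mu=\sum_{n\ge 1}2^{-n}\mu_n$ of non-atomic measures supported on the uncountable sets $\Gamma_{\delta_n}(x_n)$, defeating any candidate expansive constant $\delta$ by choosing $n$ with $\delta_n<\delta$. The only cosmetic difference is that you spell out the Cantor--Bendixson argument producing the non-atomic measures, whereas the paper delegates that step to a citation of Morales--Sirvent.
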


\begin{proof}
\textcolor{black}{\emph{Direct}. Let $\delta>0$ be such that for all
$x\in M$ it holds that $\Gamma_\delta(x)$ is countable. Let $\mu$ be
a non-atomic Borel probability measure. Since $\mu$ is non-atomic,
by $\sigma$-aditivity we have that $\mu(\Gamma_\delta(x))=0$.
Therefore, $f$ is measure-expansive.}

\textcolor{black}{\emph{Converse}.}
 Arguing by contradiction, we assume that $f$ is measure-expansive but there \textcolor{black}{are sequences} $\delta_n\to 0$ and $x_n\in M$
 \textcolor{black}{such that $\Gamma_{\delta_n}(x_n)$ is uncountable for each $n\geq 1$.
 As in \cite{MoSi}, for each $n\geq 1$ consider a non-atomic Borel
 probability measure $\mu_n$ such that
 $\mu_n(\Gamma_{\delta_n}(x_n))=1$.
Consider the Borel probability measure $\mu$ defined for a Borel set $A\subset M$ as
$$\mu(A)=\sum_{n=1}^\infty\frac{\mu_n(A)}{2^n}.$$
Since every $\mu_n$ is non-atomic, we have that $\mu$ is non-atomic too.
Thus, since $f$ is measure-expansive, there is $\delta>0$ such that $\mu(\Gamma_\delta(x))=0$ for all $x\in M$.
Since $\delta_n\to 0$ we can take $\delta_n<\delta$. }
Then
 \[
\mu(\Gamma_{\delta}(x_n))\geq  \mu(\Gamma_{\delta_n}(x_n))\geq \frac{\mu_n(\Gamma_{\delta_n}(x_n))}{2^n}>0.
 \]
 \textcolor{black}{This contradiction proves the theorem}.
\end{proof}

\textcolor{black}{For the proof of Theorem \ref{maintheorem} we
recall some known facts.}

\begin{lem}
The following statements are equivalent:
\begin{enumerate}
\label{equiv}
 \item $f$ is cw-expansive,
 \item there is $\delta>0$ such that for all $x\in M$ it holds that $\Gamma_\delta(x)$ contains no non-trivial continua.
\end{enumerate}
\end{lem}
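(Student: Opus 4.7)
The plan is to prove both directions by a direct diameter-vs.-dynamical-ball comparison, using only the triangle inequality and the respective defining constants.

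For the implication $(2) \Rightarrow (1)$, I would take the $\delta > 0$ supplied by (2) and show this same $\delta$ witnesses cw-expansivity. Given a non-trivial continuum $C$, I assume for contradiction that $\diam(f^n(C)) \leq \delta$ for every $n \in \Z$. Fixing any basepoint $x \in C$, the assumption forces $\dist(f^n(x), f^n(y)) \leq \diam(f^n(C)) \leq \delta$ for all $y \in C$ and all $n$, so $C \subset \Gamma_\delta(x)$. But $C$ is a non-trivial continuum contained in $\Gamma_\delta(x)$, contradicting (2).

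For $(1) \Rightarrow (2)$, I would start with a cw-expansive constant $\delta_0$ and claim that $\delta := \delta_0/2$ works for (2). Suppose a non-trivial continuum $C$ is contained in $\Gamma_\delta(x)$ for some $x$. For any two points $y, z \in C$, the triangle inequality applied at each iterate gives
\[
\dist(f^n(y), f^n(z)) \leq \dist(f^n(y), f^n(x)) + \dist(f^n(x), f^n(z)) \leq 2\delta = \delta_0
\]
for every $n \in \Z$. Taking the supremum over $y, z \in C$ shows $\diam(f^n(C)) \leq \delta_0$ for all $n$, contradicting the cw-expansiveness of $f$ with constant $\delta_0$.

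Neither direction poses a real obstacle; the only subtlety is the factor $2$ in the triangle-inequality step, which forces the constant in (2) to be half the cw-expansive constant (and vice versa in the other direction no loss is incurred). Since the lemma is only an equivalence of defining constants up to a factor of two, no topological facts about continua beyond their being non-degenerate subsets of $M$ are needed.
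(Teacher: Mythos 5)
Your proof is correct and follows essentially the same route as the paper's: the same triangle-inequality comparison, with the cw-expansive constant halved in one direction and reused unchanged in the other. The only cosmetic difference is that the paper phrases the cw-expansive-to-(2) direction via connected components of $\Gamma_\delta(x)$, while you argue directly with an arbitrary non-trivial continuum contained in it.
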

\begin{proof}
 For the direct part, consider a cw-expansive constant $\epsilon>0$ and take $\delta=\epsilon/2$.
 If $C\subset \Gamma_\delta(x)$ is a connected component then
 $\diam(f^n(C))\leq 2\delta$ for all $n\in\Z$.
 Since $\epsilon=2\delta$ is a cw-expansive constant, we conclude that $C$ is a singleton.
 Then, every continuum contained in $\Gamma_\delta(x)$ is trivial for all $x\in M$.

 In order to prove the converse we consider $\delta>0$ such that every $\Gamma_\delta(x)$ \textcolor{black}{has no} non-trivial \textcolor{black}{continua}.
 Let us show that $\delta$ is a cw-expansive constant.
 Suppose that $C\subset M$ is a continuum and $\diam(f^n(C))\leq \delta$ for all $n\in\Z$.
 Given $x\in C$ we have that for all $y\in C$ it holds that $\dist(f^n(x),f^n(y))\leq\delta$.
 Therefore, $y\in\Gamma_\delta(x)$. Since $y$ is arbitrary, we have that $C\subset \Gamma_\delta(x)$.
 By hypothesis, we have that $\Gamma_\delta(x)$ contains no non-trivial continuum, therefore, $C$ is a singleton.
\end{proof}

\begin{lem}
\label{implic} The following implications hold:
\[
 \hbox{expansive } \Rightarrow \hbox{countably-expansive} \Rightarrow \hbox{cw-expansive}.
\]
\end{lem}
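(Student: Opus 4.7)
The plan is to dispatch both implications directly, using Lemma \ref{equiv} for the second one.

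For the first implication, I would simply observe that if $\delta>0$ is an expansive constant, then $\Gamma_\delta(x)=\{x\}$ is a one-point set, hence countable. So the same $\delta$ works as a countable-expansivity constant.

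For the second implication, the idea is to combine countability with Lemma \ref{equiv}. Suppose $\delta>0$ witnesses countable expansivity, so every $\Gamma_\delta(x)$ is countable. If $C\subset\Gamma_\delta(x)$ were a non-trivial continuum, then $C$ would be a compact, connected metric space with at least two points; any such space is uncountable (a classical fact: a non-degenerate continuum in a Hausdorff space has cardinality at least that of the continuum, as one can construct uncountably many disjoint non-empty open sets via iterated separation, or more directly use that the image of a continuous surjection from $C$ onto a non-trivial arc in $[0,1]$ — obtained from the distance function to a fixed point — must be a non-degenerate interval). This contradicts the countability of $\Gamma_\delta(x)$. Hence $\Gamma_\delta(x)$ contains no non-trivial continuum, and by Lemma \ref{equiv} the map $f$ is cw-expansive.

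The only real content is the topological fact that a non-trivial continuum is uncountable; everything else is immediate from the definitions. I would either cite a standard general-topology reference for this fact or give the one-line argument that the continuous function $y\mapsto \dist(x,y)$ on a non-trivial continuum $C$ with $x\in C$ has connected image containing $0$ and some positive value, so its image is a non-degenerate interval and thus $C$ is uncountable.
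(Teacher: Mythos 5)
Your proposal is correct and follows exactly the paper's argument: the first implication is immediate since singletons are countable, and the second combines the uncountability of non-trivial continua with Lemma \ref{equiv}. The only difference is that you supply a justification (via the connected image of the distance function) for the fact that a non-degenerate continuum is uncountable, which the paper simply asserts.
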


\begin{proof}
 The first implication is obvious because singletons are countable sets.
 The second one holds because every non-trivial continuum is uncountable.
 Therefore, if $\Gamma_\delta(x)$ is countable, it cannot contain any non-trivial continuum.
 \textcolor{black}{By Lemma \ref{equiv} we have that $f$ is cw-expansive}.
\end{proof}

\textcolor{black}{Now assume that $f$ is a
\mbox{$C^1$-diffeomorphism} of a compact smooth manifold $M$ and
recall the definitions from the introduction.}

\begin{thm}[\cite{SaSuYa}]
\label{maintheorem}
 \textcolor{black}{The following equality holds:
 \[
  \interior{\mathcal{E}}=\interior{\mathcal{PE}}.
 \]}
\end{thm}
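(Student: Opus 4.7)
The plan is to deduce the equality from a chain of set-theoretic inclusions between $\mathcal{E}$, $\mathcal{PE}$, and $\mathcal{CE}$, followed by taking $C^1$-interiors and invoking the Sakai result $\interior{\mathcal{E}}=\interior{\mathcal{CE}}$ from \cite{Sa97}. The heart of the argument is that, thanks to Theorem \ref{count} and Lemma \ref{implic}, measure-expansiveness is sandwiched between expansiveness and cw-expansiveness at the pointwise level.

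More precisely, I would first observe that every expansive diffeomorphism is countably-expansive (singletons are countable), and by Theorem \ref{count} countably-expansive is the same as measure-expansive. Hence $\mathcal{E}\subset\mathcal{PE}$. Next, by Theorem \ref{count} again, any $f\in\mathcal{PE}$ is countably-expansive, and by Lemma \ref{implic} countably-expansive implies cw-expansive, so $\mathcal{PE}\subset\mathcal{CE}$. Combining these yields
\[
\mathcal{E}\subset\mathcal{PE}\subset\mathcal{CE}.
\]

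Taking $C^1$-interiors preserves inclusions, so
\[
\interior{\mathcal{E}}\subset\interior{\mathcal{PE}}\subset\interior{\mathcal{CE}}.
\]
Now I would invoke Sakai's theorem from \cite{Sa97}, which asserts $\interior{\mathcal{E}}=\interior{\mathcal{CE}}$. This collapses the chain, forcing $\interior{\mathcal{PE}}=\interior{\mathcal{E}}$, which is the desired equality.

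There is essentially no obstacle in this plan, since every nontrivial step has already been done: Theorem \ref{count} handles the bridge from measure-expansive to countably-expansive, Lemma \ref{implic} handles countably-expansive to cw-expansive, and \cite{Sa97} handles the robust coincidence of expansive and cw-expansive diffeomorphisms. The only thing one needs to be careful with is that the implications in Lemma \ref{implic} and Theorem \ref{count} are purely topological statements about homeomorphisms, so they apply to any $C^1$-diffeomorphism without modification, and the $C^1$-topology plays no role until the very last step where Sakai's theorem is used.
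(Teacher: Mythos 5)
Your proposal is correct and follows essentially the same argument as the paper: the inclusions $\mathcal{E}\subset\mathcal{PE}\subset\mathcal{CE}$ via Theorem \ref{count} and Lemma \ref{implic}, passing to $C^1$-interiors, and closing the chain with Sakai's result $\interior{\mathcal{CE}}\subset\interior{\mathcal{E}}$ from \cite{Sa97}. No gaps.
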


\begin{proof}
By definition of the sets $\mathcal{E}$, $\mathcal{PE}$ and using
Theorem \ref{count} and Lemma \ref{implic} we have that
\[
\interior{\mathcal{E}}\subset\interior{\mathcal{PE}}\subset\interior{\mathcal{CE}}.
\]
% \begin{eqnarray}
% \label{eqlemma1} \interior{\mathcal{PE}}\subset
% \interior{\mathcal{CE}}.
% \end{eqnarray}
% Moreover, by definition we obtain that
% $$
% \interior{\mathcal{E}}\subset \interior{\mathcal{PE}},
% $$
% since $\mathcal{E}\subset \mathcal{PE}$.
Finally, by Theorem 1 in \cite{Sa97} we have that
$$
\interior{\mathcal{CE}}\subset \interior{\mathcal{E}}.
$$
This finishes the proof.
% Therefore
% \begin{eqnarray}
% \label{eqlemma2}
% \interior{\mathcal{CE}}\subset\interior{\mathcal{PE}}.
% \end{eqnarray}
% From (\ref{eqlemma1}) and (\ref{eqlemma2}) the proof follows.
% \end{proof}
%
%
% \begin{proof}
% It follows by Lemma
% \ref{lemma1} and Theorem 1 in \cite{Sa97}.
\end{proof}

\begin{bibdiv}
\begin{biblist}

\bib{BW}{article}{
author={R. Bowen and P. Walters}, title={Expansive one-parameter
flows}, journal={J. Diff. Eq.}, year={1972}, pages={180--193},
volume={12}}

\bib{BW69}{article}{
author={B. F. Bryant}, author={P. Walters}, title={Asymptotic
properties of expansive homeomorphisms}, journal={Math. Systems
Theory}, year={1969}, pages={60--66}, volume={3}}

\bib{CaMo}{article}{
author={D. Carrasco-Olivera},
author={C. A. Morales},
title={Expansive measures for flows},
journal={J. Diff. Eq.},
volume={256},
pages={2246--2260},
year={2014}}

\bib{CZ09}{article}{
author={Y. Cao}, author={Y. Zhao}, title={Measure-theoretic pressure
for subadditive potentials}, journal={Nonlinear Anal.}, volume={70
(6)}, pages={2237--2247}, year={2009}}

\bib{CS99}{article}{
author={M. Cerminara}, author={M. Sambarino}, title={Stable and
unstable sets of $C^0$ perturbations of expansive homeomorphims of
surfaces}, journal={Nonlinearity}, volume={12 (2)},
pages={321--332}, year={1999}}

\bib{F89}{article}{
author={A. Fathi}, title={Expansiveness, hyperbolicity and Hausdorff
dimension}, journal={Comm. math. Phys.}, volume={126 (2)},
pages={249--262}, year={1989}}

\bib{G72}{article}{
author={W. Gottschalk}, title={Maximal measure for expansive
homeomorphims}, journal={J. London Maht. Soc}, year={1972},
pages={439--444}, volume={2(5)}}

\bib{GH}{article}{
author={W. Gottschalk}, author={G.A. Hedlund}, title={Topological
dynamics}, journal={American Mathematical Society Colloquium
Publications, Vol.}, volume={26}, pages={American Mathematical
Society, Providence, R. I., 1955}}

\bib{H87}{article}{
author={K. Hiraide}, title={Expansive homeomorphims of compact
surfaces are pseudo-Anosov}, journal={Proc. Japan Acad. Ser. A Math.
Sci.}, volume={63}, number={9}, year={987}, pages={337--338}}

\bib{JU60}{article}{
author={J. F. Jakobsen}, author={W. R. Utz}, title={The
non-existence of expansive homeomorphisms on a closed $2$-cell},
journal={Pacific J. math.}, year={1960}, pages={1319--1321},
volume={10}}

\bib{Ka}{article}{
author={H. Kato},
title={Continuum-wise expansive homeomorphisms},
journal={Canad. J. Math.},
volume={45},
number={3},
year={1993},
pages={576--598}}

\bib{K98}{article}{
author={H. Kato}, title={Expansive homeomorphims on surfaces with
holes, Special volume in memory of Kiiti Morita}, journal={Topology
Appl.}, volume={82}, number={1-3}, year={1998}, pages={267--277}}

\bib{L89}{article}{
author={J. Lewowicz}, title={Expansive homeomorphims of surfaces},
journal={Bol. Soc. Brasil. Mat. (N.S.)}, volume={20}, number={1},
year={1989}, pages={113--133}}

\bib{Ma}{article}{
author={R. Mañé},
title={Expansive homeomorphisms and topological dimension},
journal={Trans. of the AMS},
volume={252},
pages={313--319},
year={1979}}

\bib{M79}{article}{
author={R. Mañé},
title={Expansive homeomorphims and
topological dimension}, journal={Trans. Amer. Math. Soc.},
volume={252}, year={1979}, pages={313--319}}

\bib{MoSi}{book}{
author={C. A. Morales},
author={V. F. Sirvent},
title={Expansive measures},
publisher={29 Colóquio Brasileiro de Matemática},
year={2013}}

\bib{O70}{article}{author={T. O'Brien}, title={Expansive homeomorphisms on compact
manifolds}, journal={Proc. Amer. Math. Soc.}, volume={24},
pages={767--771}, year={1970}}

\bib{R65}{article}{
author={W. Reddy}, title={The existence of expansive homeomorphisms
on manifolds}, journal={Duke Math. J.}, volume={32},
pages={627-632}, year={1965}}

\bib{RR87}{article}{
author={W. Reddy}, author={L. Robertson}, title={Sources, sinks and
saddles for expansive homeomorphims with cannonical coordinates},
journal={Rocky Mountain J. Math.}, volume={17 (4)},
pages={673--681}, year={1987}}

\bib{Sa97}{article}{
author={K. Sakai},
title={Continuum-wise expansive diffeomorphisms},
journal={Publicacions Matemàtiques},
volume={41},
year={1997},
pages={375--382}}

\bib{Sa95}{article}{
author={K. Sakai}, title={Hyperbolic metrics of expansive
homeomorphims}, journal={Topolgy Appl.}, volume={63 (3)},
year={1995}, pages={263--266}}

\bib{SaSuYa}{article}{
author={K. Sakai},
author={N. Sumi},
author={K. Yamamoto},
title={Measure-expansive diffeomorphisms},
journal={J. Math. Anal. Appl.},
volume={414},
year={2014},
pages={546--552}}

\bib{S72}{article}{
author={M. Sears}, title={Expansive self-homeomorphisms of the
Cantor set}, journal={Math. Systems Theory}, volume={6},
year={1972}, pages={129--132}}

\bib{U50}{article}{
author={M.R. Utz}, title={Unstable homeomorphisms}, journal={Proc.
Amer. Math. Soc.}, volume={1}, year={1950}, pages={769--774}}

\bib{TV99}{article}{
author={F. Takens}, author={E. Verbitski}, title={Multifranctal
analysis of local entropies for expanvive homeomorphims with
specification}, journal={Comm. Math. Phys.}, volume={203 (3)},
year={1999}, pages={593--612}}

\bib{W66}{article}{
author={R. Williams}, title={Some theorems on expansive
homeomorphisms}, journal={Amer. Math. Monthly}, volume={73},
year={1966}, pages={854--856}}

\bib{W69}{article}{
author={R. Williams}, title={On expansive homeomorphisms},
journal={Amer. Math. Monthly}, volume={76}, year={1969},
pages={176--178}}
\end{biblist}
\end{bibdiv}

\end{document}